\newtheorem{theorem}{Theorem}[section] 
\theoremstyle{definition} 
\newtheorem{definition}[theorem]{Definition}
\newtheorem{remark}[theorem]{Remark}
\newtheorem{example}[theorem]{Example}
\title{Topological Obstructions to Shared Priors}
\author{Owen D. Biesel}
\address{Southern Connecticut State University}
\email{bieselo1@southernct.edu}
\author{Colin McSwiggen}
\address{Academia Sinica}
\email{csm@as.edu.tw}
\author{Ted Theodosopoulos}
\address{The Nueva School}
\email{ttheodosopoulos@nuevaschool.org}
\author{Michael G. Titelbaum}
\address{University of Wisconsin--Madison}
\email{titelbaum@wisc.edu}
\begin{document}

\begin{abstract}
        Given a finite collection of probability measures defined on subsets of a measurable space, how can we determine if they are compatible, in the sense that they can be realized as conditional distributions of a single probability measure on the full space?  This formulation of the consistency problem for conditional probabilities is significant in Bayesian epistemology and probabilistic reasoning, as it describes the conditions under which a collection of agents can reach agreement by sharing information.  We derive a necessary and sufficient condition under which joint compatibility is equivalent to pairwise compatibility.  This condition is stated in terms of the cohomology of a simplicial complex constructed from the given probability measures, exposing a novel application of algebraic topology to Bayesian reasoning.
\end{abstract}

\maketitle

\tableofcontents

\section{Introduction and motivation}

Which types of disagreements can be resolved through discussion?  That is, given a group of people holding different beliefs about the world, how can we determine whether their beliefs are compatible, in the sense that they might all agree if everyone had access to the same information? This question has provoked considerable debate among philosophers of knowledge and belief. It also contains surprising mathematical subtleties.  In this paper, we study a simple mathematical model of this problem and find that it reveals a novel link between probability and algebraic topology.

We consider a collection of agents, indexed by some finite set $I$, who seek to reconcile their beliefs about the world.  We work in a Bayesian setting where the possible states of the world are modeled by a measurable space $A$, and the beliefs of each agent $i \in I$ are modeled by a \emph{credence function}, which is a probability measure $P_i$ on some measurable subset $A_i \subseteq A$.  The set $A_i$ represents all possibilities of which the agent is aware; if two agents $i,j \in I$ have access to different information or differ in their conceptual understandings of the world, then we may have $A_i \ne A_j$.  We say that these two agents are \emph{compatible} if they agree on the conditional probabilities of the outcomes that they both consider: that is, if agents $i$ and $j$ both assign positive probability to the overlap $A_i \cap A_j$, then the conditional distributions of $P_i$ and $P_j$ on this overlap event are equal.

This notion of compatibility is equivalent to the existence of an \emph{ur-prior} for these two agents (Definition \ref{def:ur-prior} below), which is a probability measure on $A_i \cup A_j$ that recovers $P_i$ or $P_j$ when conditioned on $A_i$ or $A_j$ respectively. The definition of an ur-prior extends naturally to the case of more than two agents: an ur-prior for the collection of agents $I$ is a probability measure on the full space $A$ that coincides with each individual's credence function $P_i$ when conditioned on the event $A_i$, for all $i \in I$.  We say that these agents are \emph{compatible} if there exists a common ur-prior for all of their credence functions.  We then ask: what do we need to check in order to determine compatibility?  In particular, does the compatibility of every pair $i,j \in I$ imply the compatibility of $I$ as a whole?

As we will show, the answer to this latter question is \emph{no}, but with qualifications: in general, pairwise compatibility of individuals does not imply compatibility of the collective; however, in certain cases it does.  The main theorem of this paper is a necessary and sufficient condition that determines when these two criteria are indeed equivalent.  This condition depends on the agents' \emph{overlap simplicial complex} (Definition \ref{def:ovelap} below), a topological object constructed from the overlap events of different subsets of agents.  The following is an informal statement of this result, which is stated and proved rigorously as Theorem \ref{thm-cohomology-urprior} below.

\begin{theorem} \label{thm:main-intro}
    Let $\mathcal{X}$ be an overlap simplicial complex for a collection of agents.  The following are equivalent:
    \begin{itemize}
        \item For \emph{all} collections of agents with overlap simplicial complex $\mathcal{X}$, pairwise compatibility is equivalent to the existence of an ur-prior.
        \item The first cohomology of $\mathcal{X}$ vanishes.
    \end{itemize}
\end{theorem}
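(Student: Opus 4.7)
My plan is to interpret the statement as an instance of the familiar slogan ``local data glues to global data if and only if $H^1$ vanishes.'' For each edge $\{i,j\} \in \mathcal{X}$, pairwise compatibility of $P_i$ and $P_j$ is equivalent to the existence of a unique positive constant $\alpha_{ij}$ with $P_i|_{A_i \cap A_j} = \alpha_{ij} P_j|_{A_i \cap A_j}$; setting $c(i,j) := \log \alpha_{ij}$ produces an antisymmetric 1-cochain on $\mathcal{X}$ with real coefficients. On any 2-simplex $\{i,j,k\} \in \mathcal{X}$, the three pairwise scaling relations restricted to the (positive-measure) triple overlap force $\alpha_{ij} \alpha_{jk} = \alpha_{ik}$, so $\delta c = 0$, and pairwise compatibility automatically yields a cocycle. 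Conversely, an ur-prior $P$ provides positive weights $w_i = P(A_i)$ with $w_i P_i = w_j P_j$ on every overlap, giving $c = \delta \log w$, while any coboundary $c = \delta u$ allows the measures $e^{u(i)} P_i$ to be glued across overlaps (pairwise equality propagates to all higher overlaps) into a single finite measure on $\bigcup_i A_i$ that normalizes to an ur-prior on $A$. In particular, for a fixed collection the class $[c] \in H^1(\mathcal{X}; \mathbb{R})$ is exactly the obstruction.

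The $(\Leftarrow)$ direction is then immediate: if $H^1(\mathcal{X}; \mathbb{R}) = 0$, every cocycle is a coboundary, so pairwise compatibility always upgrades to ur-prior existence. For $(\Rightarrow)$, suppose $H^1(\mathcal{X}; \mathbb{R}) \neq 0$ and fix a cocycle $c$ representing a nonzero class. I would realize $\mathcal{X}$ as an overlap complex by introducing one atom $x_\sigma$ per simplex $\sigma \in \mathcal{X}$ and setting $A_i := \{x_\sigma : i \in \sigma\}$, which forces $\bigcap_{i \in \sigma} A_i = \{x_\tau : \tau \supseteq \sigma\}$ and hence gives the correct overlap combinatorics. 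Fixing a basepoint vertex $v(\sigma) \in \sigma$ for each $\sigma$ with $|\sigma| \geq 2$ and small positive weights $\varepsilon_\sigma$, I assign $P_i(\{x_\sigma\}) := \varepsilon_\sigma \, e^{c(i, v(\sigma))}$ and put the leftover mass on $\{x_{\{i\}}\}$ to normalize $P_i$. The cocycle identity $c(i,v(\sigma)) - c(j,v(\sigma)) = c(i,j)$ guarantees that the realized pairwise log-ratio on each edge is exactly $c(i,j)$; choosing each $\varepsilon_\sigma$ small enough keeps the leftover mass positive. Since $[c] \neq 0$, no $u$ can coboundary $c$, and the cocycle/coboundary correspondence shows that this pairwise-compatible collection admits no ur-prior.

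The conceptual heart is the cocycle/coboundary correspondence, which is essentially formal once set up; the genuinely technical step is the realization lemma in the $(\Rightarrow)$ direction. The delicate point is to ensure that the constructed complex is \emph{exactly} $\mathcal{X}$ rather than a sub- or supercomplex, and that the probability normalization constraints do not spuriously obstruct any cohomology class from being realized. A secondary subtlety is the consistent treatment of null sets: the pairwise compatibility condition is vacuous on measure-zero overlaps, so the combinatorial identification of which simplices belong to $\mathcal{X}$ must be compatible with the positive-measure content of the $P_i$ at every stage of the argument.
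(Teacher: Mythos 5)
Your proposal is correct and follows essentially the same route as the paper: the same log-ratio cocycle $c(i,j)=\log\bigl(P_i(A_i\cap A_j)/P_j(A_i\cap A_j)\bigr)$ (note that your $\alpha_{ij}$ must be read as the proportionality constant between the \emph{un-normalized} restrictions of $P_i$ and $P_j$ to the overlap, since the conditionalized measures are both probability measures and hence equal when compatible), the same cocycle/coboundary characterization of pairwise compatibility and ur-prior existence, and the same realization of $\mathcal{X}$ by one atom per simplex with weights $e^{c(i,v(\sigma))}$ driven by a non-cobounding cocycle. The only differences are cosmetic: the paper takes $v(\sigma)=\max\sigma$ and normalizes by dividing by total mass (so its realized cocycle equals $c$ only up to a coboundary), whereas your $\varepsilon_\sigma$-plus-leftover-mass normalization realizes $c$ exactly.
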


Cohomology is an invariant that can be defined for any topological space; we recall its construction, along with the other topological prerequisites, below in Section \ref{sec:cohomology-background}.

Theorem \ref{thm:main-intro} gives a criterion for a criterion: it says that to check compatibility of $I$, it is sufficient to check each pair $i,j \in I$, \emph{if and only if} $\mathcal{X}$ satisfies a certain topological condition.  This equivalence is interesting for several reasons.

First, it expresses a probabilistic fact (applicability of the pairwise compatibility test) in terms of a topological invariant (the cohomology of the overlap complex). As far as we are aware, this represents a novel link between probability and algebraic topology. The topological perspective offers clear explanations for some properties of ur-priors that may otherwise be counterintuitive, such as how compatibility is affected by the inclusion of a new agent (see Examples \ref{ex-plugged-hole} and \ref{ex:empty-fourway-intersection} below).

Second, it establishes a fully general, necessary and sufficient condition under which pairwise compatibility guarantees that an ur-prior is available. Because of the philosophical significance of ur-priors, philosophers have examined this problem and have proven some sufficient conditions \cite{Titelbaum2013, Titelbaum2022}. However, the problem of fully characterizing the relationship between these two criteria has been left open.  Our work gives a complete answer to this question.

Finally, from a computational point of view, Theorem \ref{thm:main-intro} gives a sufficient condition under which the existence of an ur-prior for a collection of agents can be determined in polynomial time.  Assuming that the sets $A_i$ are finite, pairwise compatibility of the agents can be checked in $O(|I|^2 \max_{i \in I} |A_i|)$ steps.  Moreover, the dimension of the first cohomology of a simplicial complex can be computed in a number of steps polynomial in the number of vertices \cite{EPbetti, Edelsbrunner2010}, and in the case of the overlap simplicial complex, the vertices correspond to elements of $I$. Thus it is possible to (1) determine whether the first cohomology of the overlap complex vanishes, and (2) if so, determine whether the agents have an ur-prior, both in polynomial time in the number of agents.

The paper is organized as follows.  In the remainder of this introduction, we give some background motivation for studying questions about the existence of ur-priors, and we review related literature in both mathematics and philosophy.  In the following section, we define some key concepts and present several illustrative examples that we will revisit throughout the paper.  Section \ref{sec:cohomology-background} gives a concise introduction to simplicial cohomology, Section \ref{sec-cohomology-urpriors} contains the statements and proofs of our main results, and Section \ref{sec:future-directions} outlines some directions for future work.

\subsection{Background and related literature}

The question of compatibility of conditional distributions has been studied from several perspectives in mathematics and statistics. Interest in the problem dates at least to the 1970's with the work of Besag on conditionally specified lattice models \cite{besag74}. In contrast to our approach, most existing work has focused on a setting in which we are given proposed distributions for a collection of $n$ random variables conditioned on certain subsets of the variables, and the task is to determine whether there exists a joint distribution with these conditionals.  This version of the problem has been treated in depth in the book by Arnold et al. \cite{arnold99}.  A few papers have also studied spaces of compatible families of conditional distributions as geometric objects in their own right \cite{matus03,morton13,SS06}. Notably, for finite probability spaces, Morton \cite{morton13} derived a complete set of algebraic conditions for the compatibility of conditional distributions given an arbitrary collection of events.  In the finite setting, our Theorem \ref{thm:main-intro} can be understood as determining when Morton's criteria can be reduced to the much simpler criterion of pairwise compatibility.  Models of opinion reconciliation, consensus formation and amalgamation of priors have also been the object of significant work in statistics, economics and multi-agent systems; see e.g. \cite{Acemoglu2011, Aumann1976, BlackwellDubins1962, GeanakoplosPolemarchakis1982, Grabisch2024, LehrerWagner1981}.

The problem that we study in this paper is also closely related to the \emph{marginal consistency problem}, which asks whether some given marginal distributions for subsets of a collection of random variables can be consistently ``glued'' to form a common joint distribution.  The marginal consistency problem has been widely studied in probability, statistics, and computer science, and the general case is known to be computationally intractible; see \cite{BenesStepan, Bhupatiraju, DO3way, KrizConditional, RoughgardenKearns, VejMarginals} and references therein. Although we will not say more about computational complexity in this paper, similar arguments likely imply that determining the existence of an ur-prior is NP-hard in general, so it is desirable to identify tractable special cases as discussed above.

Important further motivation for studying ur-priors comes from philosophy. Since the work of Kelly \cite{Kelly2005}, Feldman \cite{Feldman2006}, and Elga \cite{Elga2007}, a swath of the epistemology literature has focused on peer disagreement. One important question when we find people disagreeing is whether the conflict arises from differences in their information, or whether there is a more deep-seated difference in how they \emph{interpret} information.  One simple test is whether, were each party to share their total information with the other, the two would come to agree. Situated in the Bayesian formalism, this is our pairwise compatibility test.

While this assumption could be loosened, for present purposes we will assume that agents use traditional Bayesian conditionalization to update their credences in the face of new evidence.  Under that assumption, any differences in how agents interpret shared evidence must be traceable back to their starting points, or ``epistemic standards'' \cite{Schoenfield2014}.  Bayesian philosophers represent those standards with an ur-prior, a distribution over a set $A$ conditionalized on the agent's evidence $A_i$ at any given time to yield her credence distribution $P_i$ at that time. (This idea of representing an agent's interpretation of evidence with an ur-prior conditionalized on her total evidence dates back at least to Carnap \cite{Carnap1950}, with significant further development by Levi \cite{Levi1980}.) On this approach, a group of agents interpret evidence the same way if there exists an ur-prior common to all of them. Titelbaum \cite{Titelbaum2013} showed that \emph{if} such an ur-prior exists, the agents will be pairwise compatible, and Titelbaum \cite{Titelbaum2022} provided a sufficient condition for pairwise compatibility to guarantee an ur-prior. Here we present the first set of \emph{necessary and sufficient} conditions for pairwise compatibility to coincide with the existence of an ur-prior.

\section{Definitions and examples}

In the following, we consider a finite collection $I$ of agents, for which each agent $i\in I$ has a credence function $P_i$, a probability measure on a measurable subset $A_i$ of a measurable space $A$ common to all the agents. Given a probability measure $P$ on $A$ with $P(A_i) > 0$, we write $P|_{A_i}$ for its conditionalization on $A_i$, defined in the usual way via Bayes' rule: $P|_{A_i}(B) = P(B \cap A_i) / P(A_i)$ for measurable $B \subseteq A$. We are interested in the question of whether the agents could have started with a single credence function on all of $A$ and then arrived at their individual credences by conditionalizing on the evidence represented by the subsets $A_i$.  That is, we would like to know whether or not these agents have an \emph{ur-prior}.

\begin{definition}[Ur-prior]\label{def:ur-prior}
    An \emph{ur-prior} for the credence functions $(P_i)_{i \in I}$ is a probability measure $P$ on $\bigcup_{i\in I} A_i$ such that $P|_{A_i} = P_i$ for each $i\in I$.
\end{definition}

A necessary condition for having an ur-prior is that each pair of agents must agree on their overlaps:

\begin{definition}[Pairwise compatibility]\label{def:pairwise-compatibility}The agents are said to be \emph{pairwise compatible} if, whenever the credence functions of two agents $i$ and $j$ both assign positive probability to some event, their credence functions agree after conditionalizing on that event. Formally, this means that if $P_i(A_i\cap A_j)$ and $P_j(A_i \cap A_j)$ are both positive, then $P_i|_{A_i\cap A_j}$ and $P_j|_{A_i \cap A_j}$ are equal.
\end{definition}

\begin{example}\label{ex-urprior}
    Consider three agents attempting to determine the type of metal in a sample. (Any readers who are practicing metallurgists may need to suspend their disbelief.) They share a set \[A = \{\text{gold, platinum, aluminum, bismuth, silver, iron, copper}\}\] of possibilities for the type of metal, but each has eliminated some of the possibilities and has their own credence function on the remainder.

    Agent 1 has determined that the metal is gray, so has narrowed down the possibilities to the set $A_1 = \{\text{platinum, aluminum, silver, iron}\}.$ Agent 2 has noticed that the metal tarnishes, narrowing down the set to $A_2 = \{\text{bismuth, silver, iron, copper}\}$. And Agent 3 knows that the metal is precious, making the possibilities $A_3 = \{\text{gold, platinum, bismuth, silver}\}$. This arrangement of possibilities is shown in \cref{fig-urprior}.
    
    The agents' credences are represented by the probability mass functions below:
    \[\begin{array}{c|c|c|c|c}
    \text{metal} & \text{platinum} & \text{aluminum} & \text{silver} & \text{iron}\\ \hline
    P_1(\{\text{metal}\}) & 1/8 & 2/8 & 2/8 & 3/8
    \end{array}\]
    \[\begin{array}{c|c|c|c|c}
    \text{metal} & \text{bismuth} & \text{silver} & \text{iron} & \text{copper}\\ \hline
    P_2(\{\text{metal}\}) & 3/20 & 4/20 & 6/20 & 7/20
    \end{array}\]
    \[\begin{array}{c|c|c|c|c}
    \text{metal} & \text{gold} & \text{platinum} & \text{bismuth} & \text{silver}\\ \hline
    P_3(\{\text{metal}\}) & 1/10 & 2/10 & 3/10 & 4/10
    \end{array}\]

This collection of agents is pairwise compatible: for example, if Agents 1 and 2 were to pool their evidence, they would narrow down the possibilities to $A_{12} = \{\text{silver, iron}\}$, and if these same two agents were to conditionalize on that subset they would both assign probability $2/5$ to silver and $3/5$ to iron.

These agents also have an ur-prior: each credence function $P_i$ is the conditionalization to $A_i$ of the following probability mass function on $A$:
\[\begin{array}{c|c|c|c|c|c|c|c}
    \text{metal} & \text{gold} & \text{platinum} & \text{aluminum} & \text{bismuth} & \text{silver} & \text{iron} & \text{copper}\\ \hline
    P(\{\text{metal}\}) & 1/27 & 2/27 & 4/27 & 3/27 & 4/27 & 6/27 & 7/27
    \end{array}\]
\end{example}

It is the main result of \cite{biesel24} that, if a collection of agents is pairwise compatible \emph{and} there is an event to which they all assign positive probability (in this case, $\{\text{silver}\}$), then the agents must also have an ur-prior. In this special case where at least one event is accorded positive probability by all agents, the existence of a common ur-prior means that were they to share all their evidence and update by conditionalizing, their posterior credences would agree.

Pairwise compatibility, however, does not always guarantee the existence of an ur-prior. More specifically, the key criterion is whether there are any ``holes'' in the agents' \emph{overlap simplicial complex}:\footnote{For our purposes, a \emph{simplicial complex} is just a collection $\Delta$ of non-empty finite sets that are closed under taking subsets: if $X \in \Delta$ and $Y \subseteq X$, then $Y \in \Delta$.  We can then regard $\Delta$ as a combinatorial description of a collection of geometric simplices whose $k$-dimensional faces correspond to sets of cardinality $k$ in $\Delta$.

For readers who are aware of the \emph{nerve complex} of a collection of sets, the overlap simplicial complex is closely related, but the definition differs slightly due to the requirement that the agents assign positive probability to the intersections in order for the corresponding simplices to be included.}

\begin{definition}[Overlap simplicial complex]\label{def:ovelap} The \emph{overlap simplicial complex} on vertex set $I$ is defined to be the set $\mathcal{X}$ of those subsets $J\subseteq I$ such that for all $j\in J$, we have $P_j(\bigcap_{k\in J} A_k) > 0$. If $J\in\mathcal{X}$, we say that the agents in $J$ \emph{overlap}.
\end{definition}

\begin{figure}
    \[\begin{tikzpicture}
        \node at (0,0.3) {silver};
        \node at (0,2) {platinum};
        \node at (-2,1.2) {aluminum};
        \node at (-1.9,-0.8) {iron};
        \node at (0,-2) {copper};
        \node at (1.9,-0.8) {bismuth};
        \node at (2,1.2) {gold};
        \draw [rotate around={60:(-1.2,1)}] (-1.2,1) ellipse[x radius = 3, y radius = 2];
        \draw [dashed, rotate around={-60:(1.2,1)}] (1.2,1) ellipse[x radius = 3, y radius = 2];
        \draw[dotted, thick] (0,-1/1) ellipse[x radius = 3, y radius = 2];
        \node at (150:3.9) {$A_1$};
        \node at (250:3.3) {$A_2$};
        \node at (10:3.8) {$A_3$};
     \draw[fill=lightgray, thick] (5.27,1) -- (8.73,1) -- (7,-2) -- cycle;
    \filldraw[fill=white, line width=1] (5.27,1) circle (.25);
    \filldraw[fill=white, line width=1.5, dashed] (8.73,1) circle (.25);
    \filldraw[fill=white, line width=1.5, dotted] (7,-2) circle (.25);
     \node at (5.27,1) {1};
     \node at (8.73,1) {3};
     \node at (7,-2) {2};
    \end{tikzpicture}\]
    \caption{\label{fig-urprior}Left: Each of the three agents has narrowed down the mystery metal to four of the seven possibilities. Every pairwise compatible collection of credences that overlaps in this way has an ur-prior. Right: The overlap complex for these three agents is a filled-in triangle, since every subset of agents assigns nonzero probability to some event.}
\end{figure}
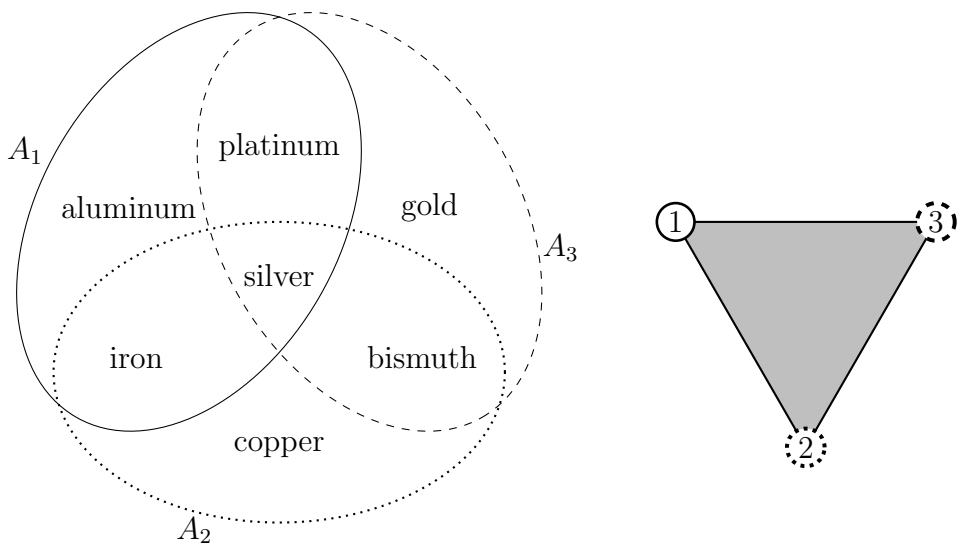

\begin{example}\label{ex-no-urprior}
    Let us consider a modified version of Example \ref{ex-urprior}, in which the agents do not consider silver a possibility, making the state space \[A = \{\text{gold, platinum, aluminum, bismuth, iron, copper}\},\] as shown in \cref{fig-no-urprior}. The following collection of credences is pairwise compatible but has no ur-prior. (Note that these are \emph{not} credences obtained via conditionalization from those in Example \ref{ex-urprior}.)

    \[\begin{array}{c|c|c|c}
    \text{metal} & \text{platinum} & \text{aluminum}  & \text{iron}\\ \hline
    P_1(\{\text{metal}\}) & 0.2 & 0.5 & 0.3
    \end{array}\]
    \[\begin{array}{c|c|c|c}
    \text{metal} & \text{bismuth} & \text{iron} & \text{copper}\\ \hline
    P_2(\{\text{metal}\}) & 0.3 & 0.2 & 0.5 
    \end{array}\]
    \[\begin{array}{c|c|c|c}
    \text{metal} & \text{gold} & \text{platinum} & \text{bismuth} \\ \hline
    P_3(\{\text{metal}\}) & 0.5 & 0.3 & 0.2
    \end{array}\]
In this case there is no ur-prior $P$, since conditionalizing to Agent 1's credence function would require $P(\{\text{iron}\})>P(\{\text{platinum}\})$, while Agent 2 requires $P(\{\text{bismuth}\})>P(\{\text{iron}\})$, and Agent 3 requires $P(\{\text{platinum}\})>P(\{\text{bismuth}\})$.
\end{example}

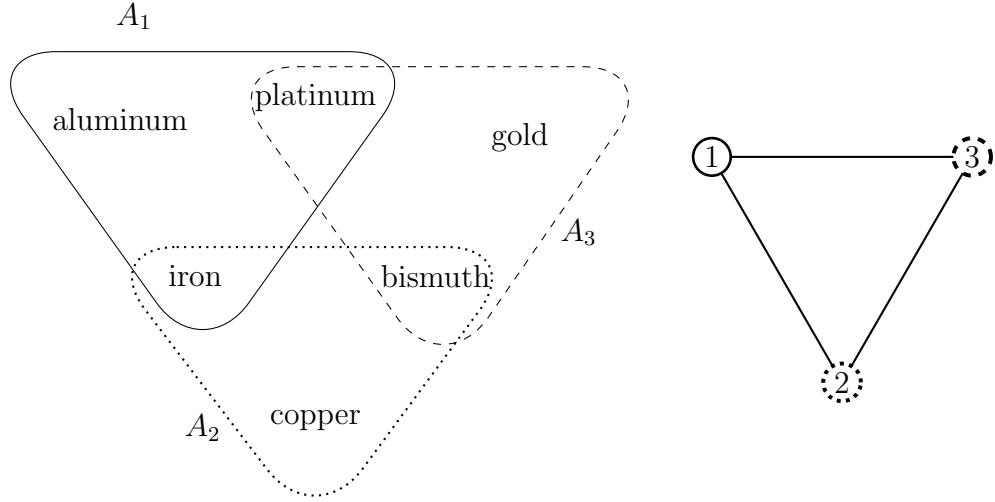
\begin{figure}
    \[\begin{tikzpicture}
        \node at (90:1.8) {platinum};
        \node at (150:3) {aluminum};
        \node at (200:1.7) {iron};
        \node at (270:2.5) {copper};
        \node at (340:1.7) {bismuth};
        \node at (25:3) {gold};
        \draw[rounded corners=30pt] (1.5,2.4) -- (-4.5,2.4) -- (-1.5,-1.8) -- cycle;
        \draw[dashed, rounded corners=30pt] (-1.3,2.2) -- (4.6,2.2) -- (1.7,-2) -- cycle;
        \draw[dotted, thick, rounded corners=30pt] (-2.9,-0.2) -- (2.8,-0.2) -- (0,-4) -- cycle;
        \node at (130:3.75) {$A_1$};
        \node at (240:3) {$A_2$};
        \node at (0:3.5) {$A_3$};
    \draw[fill=white, thick] (5.27,1) -- (8.73,1) -- (7,-2) -- cycle;
    \filldraw[fill=white, line width=1] (5.27,1) circle (.25);
    \filldraw[fill=white, line width=1.5, dashed] (8.73,1) circle (.25);
    \filldraw[fill=white, line width=1.5, dotted] (7,-2) circle (.25);
     \node at (5.27,1) {1};
     \node at (8.73,1) {3};
     \node at (7,-2) {2};
    \end{tikzpicture}\]
    \caption{\label{fig-no-urprior}Left: Now each agent has narrowed down the possibilities to three of the six. In this arrangement, the agents may be pairwise compatible but have no ur-prior. Right: The overlap complex for the three agents is an unfilled triangle, since each pair of agents assigns nonzero probability to some common event but the three agents together do not.}
\end{figure}

In a sense that we will make precise, the problem in passing from pairwise compatibility to the existence of an ur-prior is the ``hole'' in the center of \cref{fig-no-urprior}.

\begin{example}\label{ex-plugged-hole}
    We now consider a case in which the nonexistence of an ur-prior implies that the agents cannot be pairwise compatible. Suppose that a fourth agent joins the three in Example \ref{ex-no-urprior}, with evidence narrowing down the possibilities to $A_4 = \{\text{platinum}, \text{iron}, \text{bismuth}\}$, as shown in \cref{fig-plugged-hole}. This new agent ``plugs the hole'' in the overlap complex from Example \ref{ex-no-urprior}, so that the new overlap complex for all four agents has trivial cohomology. (For details of the cohomology calculation, see Example \ref{ex:coh-plugged-hole} below.) 
    
    These four agents cannot possibly have an ur-prior, since the original three agents in Example \ref{ex-no-urprior} did not. \cref{thm-cohomology-urprior} therefore implies that these agents cannot be pairwise compatible.  Since we have already checked that Agents 1, 2 and 3 \emph{are} pairwise compatible, we can conclude that there is no possible credence function for Agent 4 that would be compatible with each of the others.
    
To check this directly, write $P_4(\{\text{platinum}\}) = \alpha$ and $P_4(\{\text{iron}\}) = \beta$, so that $P_4(\{\text{bismuth}\}) = 1-\alpha-\beta$.  The requirement of pairwise compatibility between Agents $1$ \& $2$, $1$ \& $3$, and $2$ \& $3$ leads to the following three equations, respectively:
\begin{eqnarray}
\frac{2}{5} & = & \frac{\alpha}{\alpha+\beta} \label{eq:imp1} \\
\frac{2}{5} & = & \frac{\beta}{1-\alpha} \label{eq:imp2} \\
\frac{2}{5} & = & \frac{1-\alpha-\beta}{1-\beta} \label{eq:imp3}
\end{eqnarray}
where $0 \leq \alpha, \beta \leq 1$ and $\alpha+\beta \leq 1$.  Equations (\ref{eq:imp1}) and (\ref{eq:imp2}) imply that $\alpha = \frac{4}{19}$ and $\beta = \frac{6}{19}$, which make the right-hand side of equation (\ref{eq:imp3}) equal to $\frac{9}{13}$, a contradiction.
\end{example}

At this point, it may seem that pairwise compatibility is equivalent to the existence of an ur-prior if and only there is a subset to which all of the agents assign nonzero probability.  However, this is not the case, as the following final example illustrates.

\begin{example}\label{ex:empty-fourway-intersection}
    Suppose now that a fourth agent joins the three agents from Example \ref{ex-urprior}.  In particular, let the fourth agent's credence function be given by
$$\begin{array}{c|c|c|c}
    \text{metal} & \text{platinum} & \text{bismuth} & \text{iron} \\ \hline
    P_4(\{\text{metal}\}) & 2/11 & 3/11 & 6/11 
    \end{array}$$
One can readily check that this new agent is pairwise compatible with the previous three, who were themselves pairwise compatible with one another.  There is clearly no subset to which all four agents assign nonzero probability.  Nevertheless, they possess an ur-prior, given by
$$\begin{array}{c|c|c|c|c|c|c|c}
    \text{metal} & \text{platinum} & \text{bismuth} & \text{iron} & \text{copper} & \text{gold} & \text{aluminum} & \text{silver} \\ \hline
    P(\{\text{metal}\}) & 2/27 & 3/27 & 6/27 & 7/27 & 1/27 & 4/27 & 4/27
    \end{array}$$
For readers who are already familiar with cohomology, we note that it is only the first cohomology of the overlap simplicial complex that needs to vanish in order to guarantee the equivalence between pairwise compatibility and the existence of an ur-prior.  In this final example, the overlap simplicial complex has nontrivial second cohomology, but its first cohomology is trivial, which suffices.  
\end{example}

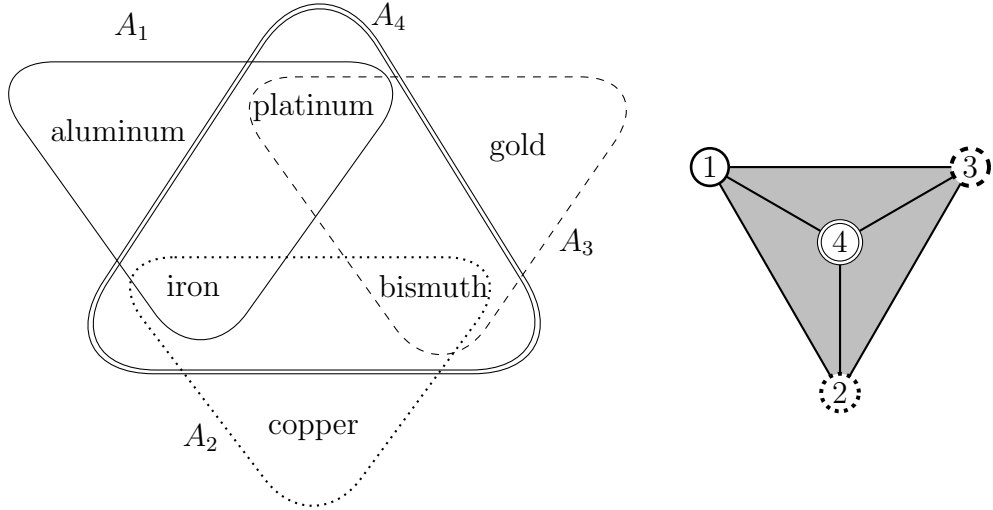
\begin{figure}
    \[\begin{tikzpicture}
        \node at (90:1.8) {platinum};
        \node at (150:3) {aluminum};
        \node at (200:1.7) {iron};
        \node at (270:2.5) {copper};
        \node at (340:1.7) {bismuth};
        \node at (25:3) {gold};
        \draw[rounded corners=30pt] (1.5,2.4) -- (-4.5,2.4) -- (-1.5,-1.8) -- cycle;
        \draw[dashed, rounded corners=30pt] (-1.3,2.2) -- (4.6,2.2) -- (1.7,-2) -- cycle;
        \draw[dotted, thick, rounded corners=30pt] (-2.9,-0.2) -- (2.8,-0.2) -- (0,-4) -- cycle;
        \draw[rounded corners=40pt] (0.1,3.8) -- (-3.5,-1.7) -- (3.5,-1.7) -- cycle;
        \draw[rounded corners=42pt] (0.1,3.9) -- (-3.6,-1.75) -- (3.6,-1.75) -- cycle;
        \node at (130:3.75) {$A_1$};
        \node at (240:3) {$A_2$};
        \node at (0:3.5) {$A_3$};
        \node at (1,3) {$A_4$};
    \draw[fill=lightgray, thick] (5.27,1) -- (8.73,1) -- (7,-2) -- cycle;
    \draw[thick] (5.27,1) -- (7,0) -- (8.73,1);
    \draw[thick] (7,0) -- (7,-2);
    \filldraw[fill=white, line width=1] (5.27,1) circle (.25);
    \filldraw[fill=white, line width=1.5, dashed] (8.73,1) circle (.25);
    \filldraw[fill=white, line width=1.5, dotted] (7,-2) circle (.25);
    \filldraw[fill=white] (7,0) circle (.3);
    \filldraw[fill=white] (7,0) circle (.25);
     \node at (5.27,1) {1};
     \node at (8.73,1) {3};
     \node at (7,-2) {2};
     \node at (7,0) {4};
    \end{tikzpicture}\]
    \caption{\label{fig-plugged-hole}Left: With the addition of a fourth agent in this configuration, pairwise compatibility again implies the existence of an ur-prior. Right: Since the fourth agent overlaps with each pair from the original three agents, the overlap complex is three filled triangles joined together without leaving any ``holes.''}
\end{figure}

To explain the precise meaning of ``holes,'' we turn to simplicial cohomology. Readers familiar with simplicial cohomology groups may skip to \cref{sec-cohomology-urpriors}.

\section{Introduction to simplicial cohomology}
\label{sec:cohomology-background}

\begin{definition}[First cohomology of a simplicial complex] \label{def:H1}
 Given a simplicial complex $\mathcal{X}$ on vertex set $I$, its \emph{first (real) cohomology} $H^1(\mathcal{X},\mathbb{R})$ is defined as follows. First, with an arbitrary order on the set $I$, we define the following sets:
 \begin{itemize}
  \item The set $\mathcal{X}_0=I$ is the set of vertices.
  \item The set $\mathcal{X}_1$ is the set of pairs of vertices $(i,j)$ such that $i<j$ and $\{i,j\}\in \mathcal{X}$.
  \item The set $\mathcal{X}_2$ is the set of triples of vertices $(i,j,k)$ such that $i<j<k$ and $\{i,j,k\}\in \mathcal{X}$.
 \end{itemize}
 A function $\mathcal{X}_k\to\mathbb{R}$ is called a \emph{$k$-cochain}, and the set of $k$-cochains is denoted $C^k$. There are functions $C^0\to C^1$ and $C^1\to C^2$, both denoted $\delta$, defined as follows: If $f:\mathcal{X}_0\to \mathbb{R}$ is a $0$-cochain, we define a $1$-cochain $\delta f : \mathcal{X}_1 \to \mathbb{R}$ by 
 \[\delta f(i, j) = f(j) - f(i),\]
 and if $g: \mathcal{X}_1 \to \mathbb{R}$ is a $1$-cochain, then we define a $2$-cochain $\delta g : \mathcal{X}_2 \to \mathbb{R}$ by 
 \[\delta g(i,j,k) = g(j,k) - g(i,k) + g(i,j).\]
 The 1-cochains $g$ such that $\delta g = 0$ are called \emph{cocycles}. The 1-cochains of the form $\delta f$ are called \emph{coboundaries}.  We can readily see that every coboundary is also a cocycle.  Specifically, let $g=\delta f \in C^1$ be a coboundary, where $f \in C^0$ is some 0-cochain.  Then $\delta g \in C^2$ such that for all $(i,j,k) \in \mathcal{X}_2$, 
 \begin{eqnarray*}
 \delta g (i,j,k) & = & \delta f (j,k) - \delta f (i,k) + \delta f (i,j) \\
 & = & \left[f(k) - f(j) \right] - \left[f(k) - f(i) \right] + \left[f(j) - f(i) \right] \\
 & = & f(k) - f(j) - f(k) + f(i) + f(j) - f(i) \\
 & = & 0
 \end{eqnarray*}
 and so $g$ is a cocycle.

 We define an equivalence relation on the set of cocycles, by saying that two cocycles are \emph{equivalent} if their difference is a coboundary. The set of equivalence classes of cocycles is called the \emph{(first, real) cohomology} of $\mathcal{X}$, denoted $H^1(\mathcal{X},\mathbb{R})$.
\end{definition}

\begin{remark}\label{rem-h1}
    There are two fundamentally different possibilities for the cohomology $H^1(\mathcal{X},\mathbb{R})$:
    \begin{enumerate}
        \item $H^1(\mathcal{X},\mathbb{R})$ has just one element, because all cocycles are already coboundaries. This means that, if $g: \mathcal{X}_1\to\mathbb{R}$ is any function satisfying $\delta g = 0$, then there must be a function $f: \mathcal{X}_0\to\mathbb{R}$ such that $g = \delta f$. In this case, we say that the cohomology \emph{vanishes} and write $H^1(\mathcal{X},\mathbb{R}) = 0$. This corresponds to the simplicial complex $\mathcal{X}$ having no ``holes.''
        \item $H^1(\mathcal{X},\mathbb{R})$ has more than one element. This means that there is a cocycle $g: \mathcal{X}_1 \to \mathbb{R}$ not equivalent to the zero function, so it is impossible to find a function $f: \mathcal{X}_0\to \mathbb{R}$ satisfying the equation $g = \delta f$. In this case, we say that the cohomology is nonzero, and the simplicial complex $\mathcal{X}$ has ``holes.''
    \end{enumerate}
\end{remark}

The next section contains the statements and proofs of our two main theorems, which establish a connection between the cohomology of the agents' overlap simplicial complex on one hand, and, on the other, the relationship between pairwise compatibility and the existence of an ur-prior for their credence functions.  It is worth pointing out that the notions linked by these theorems --- cohomology and pairwise compatibility/ur-priors --- involve distinct categories.  Specifically, cohomology is a topological property of the subsets of possibilities that the agents entertain (see Definitions \ref{def:ovelap} and \ref{def:H1}), while pairwise compatibility and the existence of an ur-prior are properties of the credence functions that the agents assign to those subsets (see Definitions \ref{def:ur-prior} and \ref{def:pairwise-compatibility}).  Thus, the latter depend on the credence functions themselves, while the former is a property of the collection of overlapping subsets and does not depend on a specific choice of credences.

Before moving on to the statement and proof of the general theorems, we return briefly to the cohomology computations for Examples \ref{ex-plugged-hole} and \ref{ex:empty-fourway-intersection}.  In particular, as seen in \cref{fig-plugged-hole} (Right), the fourth agent in Example \ref{ex-plugged-hole} has closed the ``hole'' that previously appeared in the overlap simplicial complex of Agents $1$, $2$ and $3$ shown in \cref{fig-no-urprior} (Right). 
 We now use elementary matrix techniques to calculate the cohomology in the presence or absence of a ``hole'' in the overlap complex.  A more detailed treatment of such calculations can be found in the introductory texts by Hatcher \cite{Hatcher2001} or Edelsbrunner and Harer \cite{Edelsbrunner2010}.

\begin{example}
    Consider the ``filled triangle'' overlap complex from \cref{fig-urprior}. Is its cohomology trivial or nontrivial? We will use elementary matrix methods to calculate the dimensions of the vector spaces of cocycles and coboundaries. Since the two dimensions turn out to be equal, we deduce that every cocycle is a coboundary and thus the cohomology is trivial.

    A 1-cochain is a function $\mathcal{X}_1\to\mathbb{R}$, which we represent as a vector with three components indexed by each of the three edges in the triangle. For example, we write the 1-cochain $f$ sending $(1,2)$ to $x$, $(1,3)$ to $y$, and $(2,3)$ to $z$ as
    \[f = \begin{pNiceMatrix}[first-col]
    _{12} & x\\
    _{13} & y\\
    _{23} & z
    \end{pNiceMatrix}.\]
    Such a 1-cochain is a cocycle if $\delta f= 0$, meaning that $f$ is in the kernel of the matrix representing $\delta:\mathcal{X}_1\to\mathcal{X}_2$:
    \[\begin{pNiceMatrix}[first-col,first-row]
     & _{12} & _{13} & _{23}\\
     _{123} & 1 & -1 & 1
     \CodeAfter
  \tikz \draw (1-1) circle (2mm);
    \end{pNiceMatrix}\]
    We compute the size of this kernel by putting the matrix into reduced row-echelon form (rref) and counting the number of non-pivot columns (columns without a row's leading 1, circled above). This matrix is already in rref, with one pivot column and two non-pivot columns, so the kernel is two-dimensional and there is a two-parameter family of cocycles. (Standard linear algebra techniques parameterize this family as $\{(y-z,y,z):y,z\in\mathbb{R}\}$.)

    Meanwhile, to find the coboundaries, we need the image of the linear transformation $\delta:\mathcal{X}_0\to\mathcal{X}_1$, represented by the following matrix (and its rref):
    \[\begin{pNiceMatrix}[first-col,first-row]
     & _1 & _2 & _3\\
     _{12} & -1 & 1 & 0\\
     _{13} & -1 & 0 & 1\\
     _{23} & 0 & -1 & 1
    \end{pNiceMatrix} \overset{\text{rref}}{\rightsquigarrow}
    \begin{pNiceMatrix}
        1 & 0 & -1\\
        0 & 1 & -1\\
        0 & 0 & 0
        \CodeAfter
        \tikz \draw (1-1) circle (2mm);
        \tikz \draw (2-2) circle (2mm);
    \end{pNiceMatrix}\]
    The dimension of the image is the number of pivot columns in the rref, so we again see that the collection of coboundaries is also a two-dimensional vector space. Since the vector space of coboundaries is contained in the vector space of cocycles and they have the same dimension, they must be equal sets of vectors. Therefore every cocycle is a coboundary and the cohomology is trivial.
\end{example}

\begin{example}
    Now we calculate the cohomology of the ``unfilled'' triangle in \cref{fig-no-urprior}. The coboundaries are unchanged, a two-dimensional family of vectors. But with $\mathcal{X}_2$ empty, the condition for a 1-cochain to be a cocycle is now trivial, so the set of cocycles is the entire three-dimensional family of 1-cochains. The cohomology, therefore, is $(3-2=1)$-dimensional and thus nontrivial; there is one ``hole.''
\end{example}

\begin{example}\label{ex:coh-plugged-hole}
    Next we calculate the cohomology of the overlap complex in \cref{fig-plugged-hole} to show that it is again trivial. This time, a 1-cochain has six components, one from each of the six edges in the complex. We can determine which are cocycles by taking the kernel of the matrix
    \[\begin{pNiceMatrix}[first-col,first-row]
        & _{12} & _{13} & _{14} & _{23} & _{24} & _{34}\\
        _{124} & 1 & 0 & -1 & 0 & 1 & 0\\
        _{134} & 0 & 1 & -1 & 0 & 0 & 1\\
        _{234} & 0 & 0 & 0 & 1 & -1 & 1
        \CodeAfter
        \tikz \draw (1-1) circle (2mm);
        \tikz \draw (2-2) circle (2mm);
        \tikz \draw (3-4) circle (2mm);
    \end{pNiceMatrix}.\]
    This matrix is again already in rref, with the three pivots circled, so there are three non-pivot columns and the vector space of cocyles is 3-dimensional.

    Meanwhile, the coboundaries are the vectors in the image of the matrix
    \[\begin{pNiceMatrix}[first-col,first-row]
     & _1 & _2 & _3 & _4\\
     _{12} & -1 & 1 & 0 & 0\\
     _{13} & -1 & 0 & 1 & 0\\
     _{14} & -1 & 0 & 0 & 1\\
     _{23} & 0 & -1 & 1 & 0\\
     _{24} & 0 & -1 & 0 & 1\\
     _{34} & 0 & 0 & -1 & 1
    \end{pNiceMatrix} \overset{\text{rref}}{\rightsquigarrow}
    \begin{pNiceMatrix}
        1 & 0 & 0 & -1\\
        0 & 1 & 0 & -1\\
        0 & 0 & 1 & -1\\
        0 & 0 & 0 & 0 \\
        0 & 0 & 0 & 0 \\
        0 & 0 & 0 & 0 
        \CodeAfter
        \tikz \draw (1-1) circle (2mm);
        \tikz \draw (2-2) circle (2mm);
        \tikz \draw (3-3) circle (2mm);
    \end{pNiceMatrix}.\]
    The rref has three pivots, so the image is 3-dimensional. Therefore the vector spaces of coboundaries and cocycles are both 3-dimensional, so every cocycle is again a coboundary and the cohomology is trivial.
\end{example}

\begin{example}
    In Example \ref{ex:empty-fourway-intersection}, we claimed that the first cohomology of the overlap complex is trivial but the second cohomology is nontrivial. We verify both of these claims now. The vector space of 1-cochains that are cocycles is the kernel of
        \[\begin{pNiceMatrix}[first-col,first-row]
        & _{12} & _{13} & _{14} & _{23} & _{24} & _{34}\\
        _{123} & 1 & -1 & 0 & 1 & 0 & 0\\
        _{124} & 1 & 0 & -1 & 0 & 1 & 0\\
        _{134} & 0 & 1 & -1 & 0 & 0 & 1\\
        _{234} & 0 & 0 & 0 & 1 & -1 & 1
    \end{pNiceMatrix}\overset{\text{rref}}{\rightsquigarrow}
    \begin{pNiceMatrix}
         1 & 0 & -1 & 0 & 1 & 0\\
         0 & 1 & -1 & 0 & 0 & 1\\
         0 & 0 & 0 & 1 & -1 & 1\\
         0 & 0 & 0 & 0 & 0 & 0
         \CodeAfter
        \tikz \draw (1-1) circle (2mm);
        \tikz \draw (2-2) circle (2mm);
        \tikz \draw (3-4) circle (2mm);
    \end{pNiceMatrix},\]
    so the space of 1-cocycles is three-dimensional. Meanwhile, the space of 1-coboundaries is unchanged from Example \ref{ex:coh-plugged-hole} and is also three-dimensional, so all the 1-cocycles are 1-coboundaries and the first cohomology is trivial.

    For those who are aware of higher cohomology, the second cohomology is nontrivial: the entire four-dimensional space of 2-cochains are are cocycles because $\mathcal{X}_3$ is empty, but the space of 2-coboundaries is the image of the above matrix and is only three-dimensional, making the second cohomology one-dimensional. The second and higher cohomology groups turn out to have no bearing on the problems considered in this paper, as \cref{thm-cohomology-urprior} shows.
\end{example}

\section{Cohomology and ur-priors}\label{sec-cohomology-urpriors}

In this section we state and prove \cref{thm-cohomology-urprior}, our main result on cohomology and the existence of ur-priors for pairwise compatible credences.

We start by showing an auxiliary result that we will use in the proof of the main theorem.  Fix a finite set $I$ of agents, with credence functions $P_i$ on subsets $A_i$ of a shared measurable space $A$, and let $\mathcal{X}$ be their overlap simplicial complex. We use the notational convention that intersections of the sets $A_i$ are written with multiple subscripts, i.e.\ $A_i\cap A_j$ is abbreviated $A_{ij}$ and $A_i\cap A_j \cap A_k$ as $A_{ijk}$. Recall that if $(i,j)\in \mathcal{X}_1$, then $P_i(A_{ij})>0$ and $P_j(A_{ij})>0$.  We define a function $g: \mathcal{X}_1\to \mathbb{R}$ by
    \[g(i,j) \coloneqq \log\left(\frac{P_i(A_{ij})}{P_j(A_{ij})}\right).\]
(Any choice of base for the logarithm will work; we will use base $2$ for definiteness.)

\begin{theorem}\label{thm-coboundary}
 If the agents are pairwise compatible, then the function $g:\mathcal{X}_1\to \mathbb{R}$ is a cocycle. Furthermore, $g$ is a coboundary if and only if the agents' credences have an ur-prior.
\end{theorem}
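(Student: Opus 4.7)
The plan is to handle the two assertions of the theorem in turn. For the cocycle claim, fix a triple $(i,j,k) \in \mathcal{X}_2$, so by definition $A_{ijk}$ has positive probability under each of $P_i$, $P_j$, and $P_k$. Applying pairwise compatibility to the pair $(i,j)$ on the subset $A_{ijk} \subseteq A_{ij}$ gives $P_i(A_{ijk})/P_i(A_{ij}) = P_j(A_{ijk})/P_j(A_{ij})$, which rearranges to $g(i,j) = \log(P_i(A_{ijk})/P_j(A_{ijk}))$. The analogous identities for $g(i,k)$ and $g(j,k)$ then make $\delta g(i,j,k) = g(j,k) - g(i,k) + g(i,j)$ into a telescoping sum that collapses to $\log 1 = 0$.

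For the easy direction of the coboundary characterization, suppose an ur-prior $P$ exists. The definition of conditionalization gives $P_i(A_{ij}) = P(A_{ij})/P(A_i)$, so $g(i,j) = \log(P(A_j)/P(A_i))$. Setting $f(i) \coloneqq \log P(A_i)$, which is well-defined because $P(A_i) > 0$ whenever $P|_{A_i}$ exists, produces $g = \delta f$ immediately.

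The converse is the harder direction, and is where I expect most of the work. Assuming $g = \delta f$ for some $0$-cochain $f$, I would set weights $w_i \coloneqq 2^{f(i)}$, so that $g = \delta f$ becomes $w_i P_i(A_{ij}) = w_j P_j(A_{ij})$ for every $\{i,j\} \in \mathcal{X}$. I would then combine this equality of total masses with pairwise compatibility $P_i|_{A_{ij}} = P_j|_{A_{ij}}$ to upgrade to the stronger statement that the weighted measures $w_i P_i$ and $w_j P_j$ agree on every measurable subset of $A_{ij}$. This local consistency allows one to glue the family $\{w_i P_i\}_{i \in I}$ into a single finite measure $\mu$ on $\bigcup_i A_i$, most cleanly by partitioning the union into the cells $S_J \coloneqq \bigcap_{j \in J} A_j \setminus \bigcup_{k \notin J} A_k$ indexed by nonempty $J \subseteq I$ and declaring $\mu$ to equal $w_j P_j$ on each $S_J$ (for any $j \in J$, by consistency). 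Normalizing $P \coloneqq \mu/\mu(\bigcup_i A_i)$ yields a probability measure, and a direct computation gives $P(A_i) = w_i/\mu(\bigcup_i A_i)$ and $P(B) = w_i P_i(B)/\mu(\bigcup_i A_i)$ for measurable $B \subseteq A_i$, so that $P|_{A_i} = P_i$ as required. The main obstacle in this argument is the gluing step: the algebraic identity $g = \delta f$ only constrains total masses on double overlaps, so pairwise compatibility is essential to promote it to agreement of full measures, after which one must still verify that the resulting patchwork is $\sigma$-additive on the union.
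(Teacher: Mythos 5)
Your proposal is correct and follows essentially the same route as the paper's proof: the same telescoping argument for the cocycle property (after using pairwise compatibility to rewrite $g(i,j)$ in terms of $P_i(A_{ijk})/P_j(A_{ijk})$), the same choice $f(i)=\log P(A_i)$ (up to sign) for the easy direction, and the same rescaling-then-gluing construction $w_iP_i$ for the converse. The only difference is cosmetic: you spell out the gluing via the partition into cells $S_J$, whereas the paper simply asserts that measures agreeing on overlaps glue.
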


\begin{proof}
    Note that the pairwise compatibility of the agents tells us that if $B\subseteq A_i\cap A_j$ is any other measurable set such that $P_i(B)$ and $P_j(B)$ are positive, then we also have $g(i,j) = \log(P_i(B)/P_j(B))$. This is because
    \begin{align*}
        \log\left(\frac{P_i(B)}{P_j(B)}\right) - \log\left(\frac{P_i(A_{ij})}{P_j(A_{ij})}\right) &= \log\left(\frac{P_i(B)/P_i(A_{ij})}{P_j(B)/P_j(A_{ij})}\right)\\
        &= \log\left(\frac{P_i|_{A_{ij}}(B)}{P_j|_{A_{ij}}(B)}\right) = \log(1) = 0
    \end{align*}
    since $P_i|_{A_{ij}} = P_j|_{A_{ij}}$ by pairwise compatibility.
    
    Next we show that $\delta g = 0$, i.e.\ that if $(i,j,k)\in\mathcal{X}_2$, then $g(j,k) - g(i,k) + g(i,j) = 0$. The assumption that $(i,j,k)\in\mathcal{X}_2$ means that agents $i$, $j$ and $k$ all assign positive probability to the event $A_i\cap A_j\cap A_k$. Therefore, we have
    \begin{align*}
        g(j,k) - g(i,k) + g(i,j) &= \log\left(\frac{P_j(A_{ijk})}{P_k(A_{ijk})}\right) - \log\left(\frac{P_i(A_{ijk})}{P_k(A_{ijk})}\right) + \log\left(\frac{P_i(A_{ijk})}{P_j(A_{ijk})}\right)\\
        &= \log\bigl(P_j(A_{ijk})\bigr)-\log\bigl(P_k(A_{ijk})\bigr)\\
        &\quad - \log\bigl(P_i(A_{ijk})\bigr) + \log\bigl(P_k(A_{ijk})\bigr)\\
        &\quad + \log\bigl(P_i(A_{ijk})\bigr) - \log\bigl(P_j(A_{ijk})\bigr)\\
        &= 0,
    \end{align*}
    so $g$ is a cocycle.

    Now we show that the cocycle $g$ is a coboundary if and only if the agents' credences have an ur-prior. Suppose first that an ur-prior exists: a credence function $P$ on the total measurable space $A$, such that for each agent $i$ we have $P_i = P|_{A_i}$. We use $P$ to define a function $f:\mathcal{X}_0\to \mathbb{R}$ by
    \[f(i) \coloneqq -\log(P(A_i)).\] We will show that $g$ is a coboundary by proving that $g = \delta f$, i.e.\ that $g(i,j) = f(i) - f(j)$.

    In particular, suppose that $(i,j)\in\mathcal{X}_1$ and consider
    \begin{align*}
        g(i,j) &= \log\left(\frac{P_i(A_{ij})}{P_j(A_{ij})}\right)\\
        &= \log\left(\frac{P|_{A_i}(A_{ij})}{P|_{A_j}(A_{ij})}\right)\\
        &= \log\left(\frac{P(A_{ij})/P(A_i)}{P(A_{ij})/P(A_j)}\right)\\
        &= -\log(P(A_i)) + \log(P(A_j)) = f(i) - f(j).
    \end{align*}

    Now for the other direction, suppose only that $g$ is a coboundary; we will prove that the agents have an ur-prior. Let $f:\mathcal{X}_0\to\mathbb{R}$ be a function such that $g(i,j) = f(i) - f(j)$ for all $(i,j)\in\mathcal{X}_1$. We use the $f(i)$ to scale each probability measure $P_i$ to a new measure $M_i$ on $A_i$ given by
    \[M_i(B) \coloneqq P_i(B) / \exp(f(i)).\]
    Observe that $M_i(A_{ij}) = M_j(A_{ij})$ for all $(i,j)\in\mathcal{X}_1$, because 
    \begin{align*}
        \frac{M_i(A_{ij})}{M_j(A_{ij})} &= \frac{P_i(A_{ij})/\exp(f(i))}{P_j(A_{ij})/\exp(f(j))}\\
        &= \exp(g(i,j) - f(i) + f(j))\\
        &= \exp(0) = 1.
    \end{align*}
    This also means that for all measurable $B\subseteq A_{ij}$, we have $M_i(B) = M_j(B)$, since
    \begin{align*}
        M_i(B) &= \frac{M_i(B)}{M_i(A_{ij})}M_i(A_{ij})\\
        &= \frac{P_i(B)}{P_i(A_{ij})}M_i(A_{ij})\\
        &= \frac{P_j(B)}{P_j(A_{ij})}M_j(A_{ij}) = M_j(B),
    \end{align*}
    using the pairwise compatibility $P_i|_{A_{ij}} = P_j|_{A_{ij}}$.
    Thus these measures $M_i$ on $A_i$ agree on their overlaps, and so glue to a single measure on all of $\bigcup_{i\in I} A_i$. Extending this measure by $0$ if necessary, we obtain a measure $M$ on all of $A$ with the property that for each $B\subset A_i$, we have $M(B) = M_i(B)$. Furthermore, since the set $I$ of agents is finite, we have $M(A)<\infty$, so we can normalize $M$ to obtain a probability measure $P$ on $A$ defined by $P(B) = M(B)/M(A)$. Then this $P$ is an ur-prior for the $P_i$, since for an arbitrary measurable $B\subseteq A_i$ we have
    \[ P|_{A_i}(B) = \frac{P(B)}{P(A_i)} = \frac{M(B)/M(A)}{M(A_i)/M(A)}= \frac{M_i(B)}{M_i(A_i)}= \frac{P_i(B) / \exp(f(i))}{P_i(A_i)/\exp(f(i))} = P_i(B),
    \]
    so $P|_{A_i} = P_i$ as desired.
\end{proof}

\begin{theorem}\label{thm-cohomology-urprior}
    Let $\mathcal{X}$ be a finite simplicial complex with vertex set $I$.
    \begin{enumerate}
        \item If $H^1(\mathcal{X},\mathbb{R})$ vanishes, then for any set of agents indexed by $I$ with overlap simplicial complex $\mathcal{X}$, having an ur-prior is equivalent to being pairwise compatible.
        \item If $H^1(\mathcal{X},\mathbb{R})\ne 0$, then there exists a set of pairwise compatible agents indexed by $I$ with overlap simplicial complex $\mathcal{X}$ but no ur-prior.
    \end{enumerate}
\end{theorem}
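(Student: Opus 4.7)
The plan is to derive both parts from Theorem \ref{thm-coboundary}, which identifies the question of ur-prior existence with whether the cocycle $g(i,j) = \log_2\bigl(P_i(A_{ij})/P_j(A_{ij})\bigr)$ is a coboundary.

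Part (1) should be essentially immediate. Existence of an ur-prior $P$ always implies pairwise compatibility via the one-line identity $P_i|_{A_{ij}} = P|_{A_{ij}} = P_j|_{A_{ij}}$. Conversely, pairwise compatibility yields a cocycle $g$ by Theorem \ref{thm-coboundary}; the hypothesis $H^1(\mathcal{X},\mathbb{R}) = 0$ forces $g$ to be a coboundary, and Theorem \ref{thm-coboundary} then produces an ur-prior.

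Part (2) is the substantive direction. Starting from a cocycle $g$ representing a nonzero class in $H^1(\mathcal{X},\mathbb{R})$, I would construct pairwise compatible agents whose associated cocycle (in the sense of Theorem \ref{thm-coboundary}) lies in the same nonzero class; by that theorem, such agents cannot admit an ur-prior. The construction uses the simplicial structure of $\mathcal{X}$ directly: introduce one atom $x_J$ for each $J \in \mathcal{X}$, set $A_i = \{x_J : i \in J\}$, and define
\[ P_i(\{x_J\}) = \frac{c_J \cdot 2^{-f_J(i)}}{Z_i}, \qquad Z_i = \sum_{K \ni i} c_K \cdot 2^{-f_K(i)}, \]
for arbitrary weights $c_J > 0$ and, for each $J \in \mathcal{X}$, a chosen 0-cochain $f_J : J \to \mathbb{R}$ satisfying $\delta f_J = g|_J$. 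Such an $f_J$ always exists because each simplex $J$ is contractible, so $g|_J$ is a coboundary on the subcomplex $J$, even though $g$ itself is not globally a coboundary. The overlap complex is exactly $\mathcal{X}$ because $\bigcap_{i \in J} A_i$ contains $x_J$ when $J \in \mathcal{X}$ and is empty otherwise.

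The remaining verifications all hinge on the identity $2^{-f_K(i)}/2^{-f_K(j)} = 2^{g(i,j)}$ whenever $K \supseteq \{i,j\}$ lies in $\mathcal{X}$. This independence of $K$ both establishes pairwise compatibility (the restrictions $P_i|_{A_{ij}}$ and $P_j|_{A_{ij}}$ come out equal atom by atom) and shows that the associated cocycle of the constructed agents equals $g$ modulo the coboundary $(i,j) \mapsto \log_2 Z_j - \log_2 Z_i$, hence represents the same nonzero class in $H^1(\mathcal{X},\mathbb{R})$. The main obstacle is conceptual: no single global 0-cochain $f : I \to \mathbb{R}$ can satisfy $\delta f = g$, precisely because $g$ is not a coboundary. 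The construction sidesteps this by gluing only simplex-local primitives $f_J$, which may well disagree on $J \cap J'$ for distinct $J, J' \in \mathcal{X}$; this disagreement does not damage pairwise compatibility inside any single overlap $A_{ij}$, but the impossibility of globalizing the $f_J$ is exactly what obstructs the existence of an ur-prior, which is the content we want.
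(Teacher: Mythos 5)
Your proposal is correct and follows essentially the same route as the paper: part (1) as an immediate corollary of Theorem \ref{thm-coboundary}, and part (2) by realizing $\mathcal{X}$ itself as the atom set with $A_i = \{x_J : i \in J\}$ and using the cocycle to build measures whose ratios on overlaps are $2^{g(i,j)}$, so that the associated cocycle differs from $g$ by the coboundary of the normalizing constants. The paper's construction is the special case of yours with $c_J = 1$ and the local primitive $f_J$ anchored at the top vertex $\max J$, so your extra generality changes nothing essential.
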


\begin{proof}
    The first part of the theorem is a straightforward corollary of \cref{thm-coboundary}. For one direction of implication, note that the existence of an ur-prior always implies that the agents are pairwise compatible. For the other direction, suppose that the agents are pairwise compatible and that $H^1(\mathcal{X},\mathbb{R})$ vanishes. Then because the agents are pairwise compatible, the function $g:\mathcal{X}_1\to\mathbb{R}$ of \cref{thm-coboundary} is a cocycle. On the other hand, the assumption that $H^1(\mathcal{X},\mathbb{R})$ vanishes means that every cocycle is also a coboundary (see Remark \ref{rem-h1}).  Thus $g$ is a coboundary, and so \cref{thm-coboundary} implies that the agents must have an ur-prior.

    To prove the second part of the theorem, we will use an arbitrary nonzero element of $H^1(\mathcal{X},\mathbb{R})$ to construct a collection of pairwise compatible agents with overlap complex $\mathcal{X}$ and no ur-prior.  Suppose now that $H^1(\mathcal{X},\mathbb{R})\ne 0$ and let $f: \mathcal{X}_1\to\mathbb{R}$ be a cocycle that is not a coboundary. First we will use $\mathcal{X}$ to build an appropriate measurable space $A$ with subsets $\{A_i: i\in I\}$ that overlap according to $\mathcal{X}$. Then we will use the cocycle $f$ to build measures on the sets $A_i$ that are related on overlaps by rescaling according to $f$. Finally, we will normalize the measures to a set of pairwise compatible probability measures and check that the resulting cocycle $g$ from \cref{thm-coboundary} is not a coboundary, implying that there is no possible ur-prior for this collection of agents.

        The measurable space of outcomes we will use is $A = \mathcal{X}$; that is, each simplex in the simplicial complex $\mathcal{X}$ will be its own point in the measurable space. The $\sigma$-algebra of measurable subsets is the full power set of $A$. The subset $A_i\subseteq A$ corresponding to each agent $i\in I$ is given by
        \[A_i = \{x\in \mathcal{X} : i\in x\}.\]
        For instance, $A_i$ contains as elements the singleton $\{i\}$, as well as any two-elements sets $\{i,j\}$ for which an edge between $i$ and $j$ exists in the simplicial complex, as well as any three-element subsets $\{i,j,k\}$ corresponding to triangles in $\mathcal{X}$ containing $i$, and so on. Note that $\bigcap_{i\in J}A_i = \{x\in \mathcal{X} : J\subseteq x\}$ is nonempty if and only if $J\in\mathcal{X}$, so to ensure that the overlap complex is $\mathcal{X}$ again we must make sure that agent $i$ assigns nonzero probability to \emph{every} point of $A_i$.

        Now we use the cocycle $f$ to build a measure $\mu_i$ on each $A_i$. Since each $A_i$ is finite, we need only assign a point mass to each element $x\in A_i$. Using the same (arbitrary) order on $I$ as in the definition of $H^1(\mathcal{X},\mathbb{R})$, we define the measure $\mu_i$ on $A_i$ by
        \[\mu_i(\{x\}) = \exp(f(i,k_0))\text{ where } k_0 = \max\{k\in x\}.\]
        Note that this means, if $x\in A_{ij}$, then 
        \begin{align*}
            \mu_i(\{x\})/\mu_j(\{x\}) &= \exp(f(i,k_0))/\exp(f(j,k_0))\\
            &= \exp(f(i,k_0) - f(j,k_0))\\
            &= \exp(f(i,j))
        \end{align*}
        using the cocycle condition on $f$ and the fact that $\{i,j,k_0\}$ is a triangle in $\mathcal{X}$. We therefore have $\mu_i(B)/\mu_j(B) = \exp(f(i,j))$ for all nonempty $B\subset A_{ij}$ as well.

        Now, since $A_i$ is finite and nonempty and each point of $A_i$ has positive mass under $\mu_i$, we can normalize $\mu_i$ to a probability measure $P_i$ by setting $P_i(B)\coloneqq \mu_i(B)/\mu_i(A_i)$ for each $B\subseteq A_i$. Considering these measures $P_i$ as credence functions for a collection of agents, we find that they are pairwise compatible: if $A_i$ and $A_j$ overlap then we have, for $B\subseteq A_{ij}$,
        \begin{align*}
            P_i|_{A_{ij}}(B) &= \frac{P_i(B)}{P_i(A_{ij})} = \frac{\mu_i(B)}{\mu_i(A_{ij})} = \frac{\mu_j(B)/\exp(f(i,j))}{\mu_j(A_{ij})/\exp(f(i,j))}\\
            &= \frac{\mu_j(B)}{\mu_j(A_{ij})} = \frac{P_j(B)}{P_j(A_{ij})} = P_j|_{A_{ij}}(B).
        \end{align*}
        Now we show that the cocycle $g$ corresponding to these agents is not a coboundary: it is in the same equivalence class as $f$. Indeed, we can calculate, for each $\{i,j\}\in \mathcal{X}$,
        \begin{align*}
            g(i,j) &\coloneqq \log\left(\frac{P_i(A_{ij})}{P_j(A_{ij})}\right)\\
            &= \log\left(\frac{\mu_i(A_{ij})/\mu_i(A_i)}{\mu_j(A_{ij})/\mu_j(A_j)}\right)\\
            &= \log\bigl(\exp(f(i,j))\bigr) - \log(\mu_i(A_i)) + \log(\mu_j(A_j))\\
            &= f(i,j) + \delta\alpha(i,j)
        \end{align*}
        where $\alpha:\mathcal{X}_0\to\mathbb{R}$ is defined by $\alpha(i)\coloneqq \log(\mu_i(A_i))$. Therefore $g$ differs from $f$ by a coboundary, and since $f$ itself is not a coboundary, neither is $g$.\qedhere
\end{proof}

\section{Directions for future work}
\label{sec:future-directions}

Our discussion so far has considered a simple model of agents, beliefs, agreement, and updating on evidence.  In closing, we point out two natural avenues for generalizing this work by relaxing assumptions.

The first assumption is that the agents share a common $\sigma$-algebra. That is, each agent's credence function is concentrated on a measurable subset $A_i$ of a common probability space $(A,\Sigma)$, inheriting the resulting sub-$\sigma$-algebra $\Sigma_i \coloneqq \sigma \left(\{B \subseteq A_i : B \in \Sigma\}\right)$.  What if the agents instead come with their own arbitrary $\sigma$-algebras on the sets $A_i$, which may not have any particular relation to one another?  This broader formalism would include philosophically relevant situations in which the agents could be modeled as possessing different, perhaps only partially translatable languages for describing the world.

In this setting of multiple distinct $\sigma$-algebras, which events should we consider in order to determine whether a collection of agents hold compatible beliefs?  One possibility might be to consider only sets that belong to the intersection $\sigma$-algebra $\bigcap_{i \in J} \Sigma_i$, where $J \subseteq I$ is the subset of agents under consideration. Another might be to consider instead the $\sigma$-algebra $\sigma(\bigcup_{i \in J} \Sigma_i)$ generated by all of the $\sigma$-algebras $\Sigma_i$, $i \in J$. The former is coarser; in our linguistic analogy, we could imagine that it describes a situation in which the agents are allowed only to communicate using words they all understand.  The latter, by contrast, is finer than the original individual $\sigma$-algebras, allowing the agents to combine their respective languages and make joint sense of new concepts.

In addition to considering multiple distinct $\sigma$-algebras, we could also consider different methods for updating the agents' credence functions. This brings us to the second major assumption that we have made in this paper, which is that all of the agents use the same update rule, namely Bayesian conditionalization.  What if different agents might apply different update rules?  Loosening this assumption would allow for scenarios in which two agents starting with the same prior credence function on the same $\sigma$-algebra might end up with incompatible posterior credences even when confronted with the same evidence.

In summary, there are many possibilities for generalizing the notion of agreement among agents that we have considered in this paper; these generalizations include models of agents with different background $\sigma$-algebras and different learning rules.  Cohomological methods like those developed in this paper may offer a versatile framework for understanding the possibilities and obstacles for reconciling beliefs in such models.

\section*{Acknowledgments}
The work of C.M. is partially supported by the National Science Foundation under grant number DMS-2103170, by the National Science and Technology Council of Taiwan under grant number 113WIA0110762, and by a Simons Investigator award via Sylvia Serfaty.

The work of M.T. is supported by the University of Wisconsin--Madison Office of
the Vice Chancellor for Research and Graduate Education with funding from the Wisconsin Alumni Research Foundation.

\end{document}